\documentclass[11pt]{article}

\usepackage{a4wide}
\usepackage{amsmath,amsbsy,amssymb,latexsym,amsthm}
\usepackage{pstricks}
\usepackage{graphicx}

% -------------------------------------------------------------

\newtheorem{thm}{Theorem}
\newtheorem{lem}[thm]{Lemma}
\newtheorem{example}{Example}
\newtheorem{defn}[thm]{Definition}
\newtheorem{rem}{Remark}

% ------------------------------------------------------------

\renewenvironment{abstract}{\begin{center}
\begin{minipage}[c]{14cm} {\begin{center}\bf Abstract \end{center}}} {\end{minipage}
\end{center}}
\newenvironment{keywords}{\begin{center}
\begin{minipage}[c]{14cm} {\bf Keywords:}} {\end{minipage}
\end{center}}
\newenvironment{msc}{\begin{center}
\begin{minipage}[c]{14cm} {\bf Mathematics Subject Classification 2010:}} {\end{minipage}
\end{center}}

% ----------------------------------------

\begin{document}

\title{Isoperimetric problems of the calculus of variations\\
with fractional derivatives\footnote{Submitted 02-Oct-2009; 
revised 30-Jun-2010; accepted 10-May-2011; 
\emph{Acta Mathematica Scientia}.}}

\author{Ricardo Almeida$^1$\\
\texttt{ricardo.almeida@ua.pt}
\and Rui A. C. Ferreira$^2$\\
\texttt{ruiacferreira@ua.pt}
\and Delfim F. M. Torres$^1$\\
\texttt{delfim@ua.pt}}

\date{$^1$Department of Mathematics\\
University of Aveiro\\
3810-193 Aveiro, Portugal\\[0.3cm]
$^2$Department of Mathematics\\
Faculty of Engineering and Natural Sciences\\
Lusophone University of Humanities and Technologies\\
1749-024 Lisbon, Portugal}

\maketitle

% ----------------------------------------

\begin{abstract}
In this paper we study isoperimetric problems of the
calculus of variations with left and right
Riemann-Liouville fractional derivatives.
Both situations when the lower bound of the variational
integrals coincide and do not coincide with the lower bound
of the fractional derivatives are considered.
\end{abstract}

\begin{msc}
49K05, 26A33.
\end{msc}

\begin{keywords}
Calculus of variations, fractional derivatives, isoperimetric problems.
\end{keywords}

% ---------------------------------------------------

\section{Introduction}

Isoperimetric problems consist in maximizing or minimizing
a cost functional subject to integral constraints \cite{MR2546784}.
They have found a broad class of
important applications throughout the centuries.
Areas of application include astronomy, physics, geometry,
algebra, and analysis \cite{MR2528200,Viktor}.
Concrete isoperimetric problems in engineering have
been also investigated by a number of authors \cite{Curtis}.

The study of isoperimetric problems
is nowadays done, in an elegant and rigorously way,
by means of the theory of the calculus of variations.
This is possible through a powerful tool known as
the Euler-Lagrange equation \cite{Brunt}.
Recently the theory of the calculus of variations
has been considered in the fractional context
\cite{RicDel,comRic:Leitmann,MR2424086,Bal,MR2425823,MR2411429,MR2401415,%
Frederico:Torres10,J:M:B,comBasia:Frac1,MR2246299}.
The fractional calculus allows to generalize
the ordinary differentiation and integration
to an arbitrary (non-integer) order, and provides
a powerful tool for modeling and solving various
problems in science and engineering
\cite{Miller,Podlubny,samko}. The problems
considered are more general, and hold for
a bigger class of admissible
functions which are not necessarily differentiable
in the classical sense \cite{Ross:Samko:Love}.
Several results were proved for the new calculus
of variations. They include: Euler-Lagrange equations
for fractional variational problems
with Riemann-Liouville \cite{AGRA1},
Riesz \cite{AGRA3}, Caputo, and $(\alpha,\beta)$
derivatives \cite{El-Nabulsi:Torres07};
transversality conditions \cite{AGRA2};
and Noether's symmetry theorem
\cite{Frederico:Torres07,Frederico:Torres08}.
For a state of the art of the fractional
variational theory see the recent papers
\cite{MyID:182,Atanackovic,MR2563910,MyID:179,El-Nabulsi:Torres08,MR2550821}
and references therein. In this paper
we develop further the theory of the
fractional variational calculus
by studying isoperimetric problems.

The paper is organized as follows. In Section~\ref{sec2}
we shortly review the necessary background on fractional calculus.
Our results are given in Section~\ref{sec:MR}. In Section~\ref{sec3}
we introduce the basic fractional isoperimetric problem and prove
correspondent necessary optimality conditions, both for normal
and abnormal extremizers (Theorems~\ref{IsoPro} and \ref{IsoPro2},
respectively). In Section~\ref{sec4} we generalize our results
for functionals where the lower bound of the integral is greater
than the lower bound of the Riemann-Liouville derivatives. Finally,
in Section~\ref{sec5} we present a necessary condition of optimality
for the case where the order of the derivative is taken as a free variable.

% ---------------------------------------------------

\section{Preliminaries of fractional calculus}
\label{sec2}

A fractional derivative is a generalization of the ordinary differentiation,
which allows real number powers of the differential operator.
There exist numerous applications of fractional derivatives
to several fields, like geometry, physics, engineering, etc.
In the literature we may find a great number of definitions
for fractional derivatives (see, \textrm{e.g.}, \cite{Miller,Podlubny,samko}).
In this paper we deal with the left and right Riemann-Liouville
fractional derivatives, which are defined in the following way.

\begin{defn}
Let $f:[a,b]\rightarrow\mathbb{R}$ be a continuous function.
The left and right Riemann-Liouville fractional
derivatives of order $\alpha>0$ are defined respectively by
$${_a\mathcal{D}_x^\alpha}
f(x)=\frac{1}{\Gamma(n-\alpha)}\frac{d^n}{dx^n}\int_a^x
(x-t)^{n-\alpha-1}f(t)dt \, , \quad x \in (a,b] \, ,$$
and
$${_x\mathcal{D}_b^\alpha}
f(x)=\frac{(-1)^n}{\Gamma(n-\alpha)}\frac{d^n}{dx^n}\int_x^b
(t-x)^{n-\alpha-1} f(t)dt \, , \quad x \in [a,b) \, ,$$
where $\Gamma$ is the Euler gamma function,
$\alpha$ is the order of the derivative, and $n = [\alpha] + 1$
with $[\alpha]$ being the integer part of $\alpha$.
\end{defn}

If $\alpha\geq 1$ is an integer, these fractional derivatives are
understood in the sense of usual differentiation, that is,
$$
{_a\mathcal{D}_x^\alpha} f(x)
=\left(\frac{d}{dx}\right)^\alpha f(x)
\quad \mbox{ and } \quad {_x\mathcal{D}_b^\alpha}
f(x)=\left(-\frac{d}{dx}\right)^\alpha f(x).
$$

>From the physical point of view, if $f(x)$ describes
a certain process through time $x$, then the left derivative is related to
the past of this process, while the right derivative belongs to the future.

These operations are linear, in the sense that
$${_a\mathcal{D}_x^\alpha} (\mu f(x)+\nu g(x))=
\mu\, {_a\mathcal{D}_x^\alpha} f(x)+\nu \, {_a\mathcal{D}_x^\alpha}
g(x)$$ and $${_x\mathcal{D}_b^\alpha} (\mu f(x)+\nu g(x))= \mu \,
{_x\mathcal{D}_b^\alpha} f(x)+\nu \, {_x\mathcal{D}_b^\alpha} g(x).$$

We now present the integration by parts formula for fractional
derivatives.

\begin{lem}{\rm (\cite[p.~46]{samko})}
If $f$ and $g$ and the fractional derivatives
${_a\mathcal{D}_x^\alpha} g$ and ${_x\mathcal{D}_b^\alpha} f$
are continuous at every point $x\in[a,b]$, then for $0<\alpha<1$
we have
\begin{equation}
\label{integracao:partes}
\int_{a}^{b}f(x) {_a\mathcal{D}_x^\alpha} g(x)dx
=\int_a^b g(x) {_x\mathcal{D}_b^\alpha} f(x)dx.
\end{equation}
Moreover, formula (\ref{integracao:partes}) is still
valid for $\alpha=1$ provided $f$ or $g$
are zero at $x=a$ and $x=b$.
\end{lem}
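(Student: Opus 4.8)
The plan is to reduce the fractional derivatives to ordinary derivatives of Riemann--Liouville fractional \emph{integrals}, to exploit a Fubini-type self-adjointness of those integrals, and then to transfer the surviving derivative by a single classical integration by parts. Throughout I take $0<\alpha<1$, so that $n=1$ and the operators read ${_a\mathcal{D}_x^\alpha}g=\frac{d}{dx}\,{_a\mathcal{I}_x^{1-\alpha}}g$ and ${_x\mathcal{D}_b^\alpha}f=-\frac{d}{dx}\,{_x\mathcal{I}_b^{1-\alpha}}f$, where I abbreviate ${_a\mathcal{I}_x^{1-\alpha}}g(x)=\frac{1}{\Gamma(1-\alpha)}\int_a^x(x-t)^{-\alpha}g(t)\,dt$ and ${_x\mathcal{I}_b^{1-\alpha}}f(x)=\frac{1}{\Gamma(1-\alpha)}\int_x^b(t-x)^{-\alpha}f(t)\,dt$.

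First I would establish the adjointness of the two fractional integrals: for continuous $p,q$,
$$\int_a^b p(x)\,{_a\mathcal{I}_x^{1-\alpha}}q(x)\,dx=\int_a^b q(x)\,{_x\mathcal{I}_b^{1-\alpha}}p(x)\,dx.$$
Both sides equal $\frac{1}{\Gamma(1-\alpha)}\iint_{a\le t\le x\le b}(x-t)^{-\alpha}p(x)q(t)\,dt\,dx$, so the identity is merely a Fubini--Dirichlet interchange of the order of integration; the kernel $(x-t)^{-\alpha}$ is integrable over the triangle because $\alpha<1$, which legitimizes the swap.

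Next, on the left-hand side of (\ref{integracao:partes}) I write ${_a\mathcal{D}_x^\alpha}g=\frac{d}{dx}\,{_a\mathcal{I}_x^{1-\alpha}}g$ and integrate by parts in the ordinary sense, obtaining $\left[f\,{_a\mathcal{I}_x^{1-\alpha}}g\right]_a^b-\int_a^b f'(x)\,{_a\mathcal{I}_x^{1-\alpha}}g(x)\,dx$. The lower boundary contribution vanishes since ${_a\mathcal{I}_x^{1-\alpha}}g(a)=0$, leaving the term $f(b)\,{_a\mathcal{I}_x^{1-\alpha}}g(b)$. Applying the adjointness identity with $p=f'$ and $q=g$ converts the remaining integral into $\int_a^b g\,{_x\mathcal{I}_b^{1-\alpha}}f'\,dx$, and a Leibniz differentiation of ${_x\mathcal{I}_b^{1-\alpha}}f$ (differentiating under the integral and collecting the moving-endpoint term) yields ${_x\mathcal{I}_b^{1-\alpha}}f'(x)=\frac{(b-x)^{-\alpha}}{\Gamma(1-\alpha)}f(b)-{_x\mathcal{D}_b^\alpha}f(x)$. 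Substituting this back produces a second boundary term equal to $f(b)\,{_a\mathcal{I}_x^{1-\alpha}}g(b)$, which cancels the first one exactly and leaves precisely $\int_a^b g(x)\,{_x\mathcal{D}_b^\alpha}f(x)\,dx$, as claimed. For the limiting case $\alpha=1$ the operators are $\frac{d}{dx}$ and $-\frac{d}{dx}$, so the assertion reduces to classical integration by parts, whose boundary term $[fg]_a^b$ vanishes under the stated hypothesis that $f$ or $g$ is zero at both endpoints.

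I expect the main obstacle to be the careful bookkeeping of the boundary terms: one must verify that the term produced by the first (classical) integration by parts and the endpoint term arising from the Leibniz differentiation of ${_x\mathcal{I}_b^{1-\alpha}}f$ are genuinely the \emph{same} quantity $f(b)\,{_a\mathcal{I}_x^{1-\alpha}}g(b)$, since it is easy to confuse the kernels $(b-x)^{-\alpha}$ and $(x-a)^{-\alpha}$ at this step. A secondary technical point is that the integration by parts and the differentiation under the integral sign formally require $f$ and the associated integrals to be slightly more regular than merely continuous; under the bare continuity hypotheses these manipulations are justified by a density/approximation argument, which is the route taken in the cited reference.
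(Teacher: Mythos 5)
Note first that the paper itself contains no proof of this lemma: it is quoted directly from \cite[p.~46]{samko}. So your proposal has to be judged on its own terms and against the argument in that reference. On its own terms, the computational core is correct for smooth data: the Fubini--Dirichlet adjointness of the two fractional integrals is right, the Leibniz identity ${_x\mathcal{I}_b^{1-\alpha}}f'(x)=\frac{(b-x)^{-\alpha}}{\Gamma(1-\alpha)}f(b)-{_x\mathcal{D}_b^\alpha}f(x)$ is a correct computation, and the two boundary terms are indeed both equal to $f(b)\cdot\frac{1}{\Gamma(1-\alpha)}\int_a^b(b-t)^{-\alpha}g(t)\,dt$, so they cancel exactly as you claim; the $\alpha=1$ case is also handled correctly. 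If one adds the hypothesis $f\in C^1([a,b])$, your argument is a complete proof.

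The genuine gap is your final sentence. The lemma assumes only that $f$, $g$, ${_a\mathcal{D}_x^\alpha}g$ and ${_x\mathcal{D}_b^\alpha}f$ are continuous, while your proof uses $f'$ in two essential places (the classical integration by parts and the differentiation under the integral sign). You defer this to ``a density/approximation argument,'' but that step is not routine and you do not indicate how to run it: one needs $C^1$ approximants $f_n$ with \emph{both} $f_n\to f$ and ${_x\mathcal{D}_b^\alpha}f_n\to{_x\mathcal{D}_b^\alpha}f$ (in $L^1$, say), and uniform convergence of $f_n$ alone gives no control on the fractional derivatives, while mollification interacts badly with the one-sided operators at the endpoints. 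The honest repair is to show that the hypotheses force the representations $f={_x\mathcal{I}_b^{\alpha}}\left({_x\mathcal{D}_b^\alpha}f\right)$ and $g={_a\mathcal{I}_x^{\alpha}}\left({_a\mathcal{D}_x^\alpha}g\right)$, where ${_a\mathcal{I}_x^{\alpha}}$, ${_x\mathcal{I}_b^{\alpha}}$ are the fractional integrals of order $\alpha$ (your $\mathcal{I}$ operators with $1-\alpha$ replaced by $\alpha$): continuity of the fractional derivatives up to the endpoints excludes the complementary solutions $c(b-x)^{\alpha-1}$ and $c(x-a)^{\alpha-1}$, and in particular forces $f(b)=0$ and $g(a)=0$, in accordance with Remark~\ref{rem:bc} of the paper. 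But once these representations are in hand, your own adjointness lemma, applied with kernel exponent $\alpha$ instead of $1-\alpha$, finishes the proof in one line,
\begin{align*}
\int_a^b f\,{_a\mathcal{D}_x^\alpha}g\,dx
&=\int_a^b {_x\mathcal{I}_b^{\alpha}}\bigl({_x\mathcal{D}_b^\alpha}f\bigr)\cdot{_a\mathcal{D}_x^\alpha}g\,dx
=\int_a^b {_x\mathcal{D}_b^\alpha}f\cdot{_a\mathcal{I}_x^{\alpha}}\bigl({_a\mathcal{D}_x^\alpha}g\bigr)\,dx\\
&=\int_a^b g\,{_x\mathcal{D}_b^\alpha}f\,dx,
\end{align*}
making the whole integration-by-parts/Leibniz bookkeeping unnecessary. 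This representation-plus-Fubini route is in fact the proof in \cite{samko} (stated there for $f\in I_{b-}^\alpha(L_p)$, $g\in I_{a+}^\alpha(L_q)$), so your attribution of a density argument to the cited reference is inaccurate. In short: your computation is valid under an extra smoothness assumption, but the reduction of the stated hypotheses to that case is missing, and filling it in properly collapses the argument into a shorter, different proof.
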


% ---------------------------------------------------

\section{Main results}
\label{sec:MR}

>From now on we fix $\alpha,\beta \in (0,1)$.
We consider functionals $\mathcal{J}$ of the form
\begin{equation}
\label{P0}
\mathcal{J}(y)=\int_a^b L(x,y,\,
{_a\mathcal{D}_x^\alpha} y,\, {_x\mathcal{D}_b^\beta} y) dx
\end{equation}
defined on the set of admissible functions $y$ that have continuous
left fractional derivatives of order $\alpha$ and continuous right
fractional derivatives of order $\beta$ in $[a,b]$, and where
$(x,y,u,v)\to L(x,y,u,v)$ is a function with continuous first
and second partial derivatives with respect to all its arguments
such that $\frac{\partial L}{\partial u}(x,y,\,
{_a\mathcal{D}_x^\alpha} y,\, {_x\mathcal{D}_b^\beta} y)$
has continuous right fractional derivative of order $\alpha$
for all $x\in[a,b]$ and $\frac{\partial L}{\partial v}(x,y,\,
{_a\mathcal{D}_x^\alpha} y,\, {_x\mathcal{D}_b^\beta} y)$
has continuous left fractional derivative of order $\beta$ in $[a,b]$.

\begin{rem}
\label{rem:bc}
The left Riemann-Liouville fractional derivative is infinite at
$x=a$ if $y(a) \not=0$. If $y(b)\not=0$, then the right
Riemann-Liouville fractional derivative is also not finite
at $x=b$ \cite{Ross:Samko:Love}. For this reason, by considering
that the admissible functions $y$ have continuous left fractional derivatives,
then necessarily $y(a)=0$; by considering that the admissible functions $y$
have continuous right fractional derivatives, then necessarily $y(b)=0$.
This fact seems to have been neglected in some previous work on the calculus
of variations with Riemann-Liouville fractional derivatives.
Alternatively, we can consider the general case of boundary conditions,
say $y(a)=y_a$ and $y(b)=y_b$, and study functionals of type
$$
\mathcal{J}(y)=\int_a^b L(x,y(x),\,{_a\mathcal{D}_x^\alpha}
(y(x)-y_a),\, {_x\mathcal{D}_b^\beta} (y(x)-y_b)) dx.
$$
This needs, however, a modified fractional calculus \cite{jumarie}.
\end{rem}

\begin{defn}
The functional $\mathcal{J}$ is said to have a \emph{local minimum}
(resp. \emph{local maximum}) at $y$ if there exists a $\delta>0$
such that $\mathcal{J}(y)\leq \mathcal{J}(y_1)$
(resp. $\mathcal{J}(y)\geq \mathcal{J}(y_1)$)
for all $y_1$ such that $\|y-y_1\|<\delta$.
\end{defn}

In \cite{AGRA1} the following problem is addressed: among all
curves $y(x)$ satisfying the boundary conditions, find the ones
that maximize or minimize a given functional $\mathcal{J}$.
An answer to this question is given in the next theorem.

\begin{thm}[\cite{AGRA1}]
\label{ELtheorem}
Let $\mathcal{J}$ be a functional as in (\ref{P0})
and  $y$ an extremum of $\mathcal{J}$. Then, $y$
satisfies the following Euler-Lagrange equation:
\begin{equation}
\label{ELequation}
\frac{\partial L}{\partial y}+{_x\mathcal{D}_b^\alpha}
\frac{\partial L}{\partial u}+{_a\mathcal{D}_x^\beta}
\frac{\partial L}{\partial v}=0.
\end{equation}
\end{thm}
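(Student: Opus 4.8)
The plan is to derive the Euler–Lagrange equation by the standard first-variation method of the calculus of variations, adapted to the fractional setting. First I would take an extremizer $y$ and embed it in a one-parameter family of admissible variations $\hat y = y + \epsilon \eta$, where $\eta$ is an admissible function (in particular, possessing continuous left fractional derivatives of order $\alpha$ and continuous right fractional derivatives of order $\beta$) and $\epsilon$ is a small real parameter. Because the boundary values are fixed by the problem, and in light of Remark~\ref{rem:bc} the admissible functions satisfy $y(a)=0$ and $y(b)=0$, I would require $\eta(a)=\eta(b)=0$ so that $\hat y$ remains admissible. Using linearity of the fractional derivative operators, $_a\mathcal{D}_x^\alpha \hat y = {_a\mathcal{D}_x^\alpha} y + \epsilon\, {_a\mathcal{D}_x^\alpha}\eta$ and similarly for the right derivative of order $\beta$.

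Next I would define $j(\epsilon) = \mathcal{J}(y + \epsilon\eta)$ and note that, since $y$ is an extremum, $\epsilon = 0$ is a critical point of $j$, so $j'(0) = 0$. Differentiating under the integral sign — which is justified by the assumed continuity of the first partial derivatives of $L$ and of the fractional derivatives involved — and applying the chain rule gives
\begin{equation*}
j'(0)=\int_a^b\left(\frac{\partial L}{\partial y}\,\eta
+\frac{\partial L}{\partial u}\,{_a\mathcal{D}_x^\alpha}\eta
+\frac{\partial L}{\partial v}\,{_x\mathcal{D}_b^\beta}\eta\right)dx=0,
\end{equation*}
where the partial derivatives of $L$ are evaluated along $(x,y,{_a\mathcal{D}_x^\alpha}y,{_x\mathcal{D}_b^\beta}y)$.

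The crucial step is to transfer the fractional derivatives off of $\eta$ and onto the coefficient functions, so that $\eta$ appears as a common factor. For this I would invoke the fractional integration by parts formula (\ref{integracao:partes}): the term containing $_a\mathcal{D}_x^\alpha\eta$ is rewritten, via (\ref{integracao:partes}) with $f=\frac{\partial L}{\partial u}$ and $g=\eta$, as $\int_a^b \eta\,{_x\mathcal{D}_b^\alpha}\frac{\partial L}{\partial u}\,dx$; symmetrically, the term with $_x\mathcal{D}_b^\beta\eta$ becomes $\int_a^b \eta\,{_a\mathcal{D}_x^\beta}\frac{\partial L}{\partial v}\,dx$. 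The hypotheses on $L$ (namely that $\frac{\partial L}{\partial u}$ has continuous right fractional derivative of order $\alpha$ and $\frac{\partial L}{\partial v}$ has continuous left fractional derivative of order $\beta$), together with $\eta(a)=\eta(b)=0$, are exactly what is needed to apply the lemma and to discard boundary contributions. Collecting terms yields
\begin{equation*}
\int_a^b\left(\frac{\partial L}{\partial y}
+{_x\mathcal{D}_b^\alpha}\frac{\partial L}{\partial u}
+{_a\mathcal{D}_x^\beta}\frac{\partial L}{\partial v}\right)\eta\,dx=0.
\end{equation*}

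Finally, since this holds for every admissible variation $\eta$ vanishing at the endpoints, I would apply the fundamental lemma of the calculus of variations (the du Bois-Reymond argument) to conclude that the bracketed expression vanishes identically on $[a,b]$, which is precisely (\ref{ELequation}). I expect the main obstacle to be the rigorous justification of the integration by parts step: one must verify that the regularity assumptions on $L$ and on the admissible functions genuinely license the use of formula (\ref{integracao:partes}) pointwise on $[a,b]$, and that the asymmetry in the derivative orders — the left derivative of order $\alpha$ pairing into a right derivative of order $\alpha$, and vice versa for $\beta$ — is tracked correctly, since it is this swap that produces the characteristic mixed structure of the fractional Euler–Lagrange equation.
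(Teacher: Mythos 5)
Your proposal is correct; note that the paper never proves Theorem~\ref{ELtheorem} itself (it is quoted from \cite{AGRA1}), but your argument---variations $\hat y=y+\epsilon\eta$ with $\eta(a)=\eta(b)=0$, differentiation under the integral sign, the integration by parts formula (\ref{integracao:partes}) applied so that ${_a\mathcal{D}_x^\alpha}$ acting on $\eta$ becomes ${_x\mathcal{D}_b^\alpha}$ acting on $\partial L/\partial u$ (and symmetrically for the $\beta$ term), followed by the fundamental lemma---is precisely the derivation in \cite{AGRA1}, and it is the same technique this paper uses in the proof of Theorem~\ref{IsoPro} and in the derivation of Theorem~\ref{ext:ELFE}. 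One small correction: for $0<\alpha,\beta<1$ formula (\ref{integracao:partes}) carries no boundary terms at all, so the conditions $\eta(a)=\eta(b)=0$ are needed only to keep $\hat y$ admissible (and to cover the case $\alpha=1$ of the formula, cf. the remark following Theorem~\ref{IsoPro}), not to ``discard boundary contributions.''
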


% ---------------------------------------------------

\subsection{The fractional isoperimetric problem}
\label{sec3}

We introduce the \emph{fractional isoperimetric problem}
as follows: find the functions $y$ that satisfy
boundary conditions
\begin{equation}
\label{bouncond}
y(a)=y_a, \quad y(b)=y_b
\end{equation}
($y_a = 0$ if left Riemann-Liouville fractional derivatives
are present in (\ref{P0}); $y_b = 0$ if right Riemann-Liouville
fractional derivatives are present in (\ref{P0})
--- \textrm{cf.} Remark~\ref{rem:bc}), the integral constraint
\begin{equation}
\label{isocons}
\mathcal{I}(y)=\int_a^b g(x,y,\, {_a\mathcal{D}_x^\alpha} y,\,
{_x\mathcal{D}_b^\beta} y)dx=l \, ,
\end{equation}
and give a minimum or a maximum to (\ref{P0}).
We assume that $l$ is a specified real constant,
functions $y$ have continuous left and right
fractional derivatives (if present in (\ref{P0})),
and $(x,y,u,v)\to g(x,y,u,v)$ is a function with continuous
first and second partial derivatives with respect to all
its arguments such that $\frac{\partial g}{\partial u}(x,y,\,
{_a\mathcal{D}_x^\alpha} y,\, {_x\mathcal{D}_b^\beta} y)$
has continuous right fractional derivative of order $\alpha$
for all $x\in[a,b]$ and $\frac{\partial g}{\partial v}(x,y,\,
{_a\mathcal{D}_x^\alpha} y,\, {_x\mathcal{D}_b^\beta} y)$
has continuous left fractional derivative of order $\beta$
in $[a,b]$. Theorem~\ref{ELtheorem} motivates the following definition.

\begin{defn}
\label{extermal}
An admissible function $y$ is an \emph{extremal} for $\mathcal{I}$
in (\ref{isocons}) if it satisfies the equation
$$
\frac{\partial g}{\partial y}+{_x\mathcal{D}_b^\alpha}
\frac{\partial g}{\partial u}+{_a\mathcal{D}_x^\beta}\frac{\partial g}{\partial v}=0
$$
for all $x\in[a,b]$.
\end{defn}

The following theorem gives a necessary condition for
$y$ to be a solution of the fractional isoperimetric
problem defined by (\ref{P0})-(\ref{bouncond})-(\ref{isocons})
under the assumption that $y$ is not an extremal for $\mathcal{I}$.

\begin{thm}
\label{IsoPro}
Suppose that $\mathcal{J}$ given by (\ref{P0})
has a local minimum or a local maximum at $y$ subject
to the boundary conditions (\ref{bouncond}) and the isoperimetric
constraint (\ref{isocons}). Further, suppose that $y$ is not
an extremal for the functional $\mathcal{I}$. Then there exists
a constant $\lambda$ such that $y$ satisfies
the fractional differential equation
\begin{equation}
\label{E-L}
\frac{\partial F}{\partial y}
+{_x\mathcal{D}_b^\alpha}
\frac{\partial F}{\partial u}
+{_a\mathcal{D}_x^\beta}\frac{\partial F}{\partial v}=0
\end{equation}
with $F=L-\lambda g$.
\end{thm}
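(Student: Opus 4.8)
The plan is to adapt the classical Lagrange multiplier technique for isoperimetric problems to the present fractional setting. First I would introduce a two-parameter family of competitors
$$
\hat{y} = y + \epsilon_1 \eta_1 + \epsilon_2 \eta_2,
$$
where $\eta_1,\eta_2$ are variations possessing continuous left and right fractional derivatives and vanishing at the endpoints, $\eta_i(a)=\eta_i(b)=0$, so that each $\hat{y}$ remains admissible and satisfies the boundary conditions (\ref{bouncond}). Setting $j(\epsilon_1,\epsilon_2)=\mathcal{J}(\hat{y})$ and $i(\epsilon_1,\epsilon_2)=\mathcal{I}(\hat{y})$ reduces the constrained variational problem to a finite-dimensional constrained extremum problem: extremize $j$ subject to $i=l$, with the extremum attained at the origin $(\epsilon_1,\epsilon_2)=(0,0)$.

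The crucial use of the hypothesis that $y$ is not an extremal for $\mathcal{I}$ comes next. Differentiating under the integral sign, using linearity of the fractional derivatives, and applying the integration-by-parts formula (\ref{integracao:partes}) to transfer the operators off $\eta_2$, I would obtain
$$
\left.\frac{\partial i}{\partial \epsilon_2}\right|_{(0,0)}
= \int_a^b \left(\frac{\partial g}{\partial y}
+ {_x\mathcal{D}_b^\alpha}\frac{\partial g}{\partial u}
+ {_a\mathcal{D}_x^\beta}\frac{\partial g}{\partial v}\right) \eta_2 \, dx.
$$
Since $y$ is not an extremal for $\mathcal{I}$, the bracketed expression does not vanish identically, so by the fundamental lemma of the calculus of variations there is a choice of $\eta_2$ making this partial derivative nonzero. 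With this $\eta_2$ fixed, the implicit function theorem lets me solve $i(\epsilon_1,\epsilon_2)=l$ locally for $\epsilon_2=\epsilon_2(\epsilon_1)$ near the origin.

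It follows that $(j,i)$ meets the hypotheses of the Lagrange multiplier rule at the origin, so there exists $\lambda$ with $\nabla(j-\lambda i)(0,0)=0$; in particular the $\epsilon_1$-component gives
$$
\left.\frac{\partial j}{\partial \epsilon_1}\right|_{(0,0)}
- \lambda \left.\frac{\partial i}{\partial \epsilon_1}\right|_{(0,0)} = 0.
$$
Computing both partials exactly as above --- differentiating under the integral and integrating by parts via (\ref{integracao:partes}) --- and collecting terms with $F=L-\lambda g$, this becomes
$$
\int_a^b \left(\frac{\partial F}{\partial y}
+ {_x\mathcal{D}_b^\alpha}\frac{\partial F}{\partial u}
+ {_a\mathcal{D}_x^\beta}\frac{\partial F}{\partial v}\right) \eta_1 \, dx = 0.
$$
Because $\eta_1$ ranges over all admissible variations vanishing at the endpoints, a final appeal to the fundamental lemma of the calculus of variations forces the integrand to vanish on $[a,b]$, which is precisely the desired equation (\ref{E-L}).

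I expect the main obstacle to be the rigorous justification of the two-parameter reduction: verifying that $j$ and $i$ are genuinely of class $C^1$ in $(\epsilon_1,\epsilon_2)$ so that differentiation under the integral sign and the implicit function theorem apply, and confirming that (\ref{integracao:partes}) is legitimately applicable (which is where the stated continuity of the relevant fractional derivatives of $\frac{\partial L}{\partial u}$, $\frac{\partial L}{\partial v}$, and their $g$-counterparts enters). The algebraic manipulations producing the Euler-Lagrange integrand are routine once these regularity points are secured.
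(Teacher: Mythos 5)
Your proposal is correct and follows essentially the same route as the paper's own proof: the two-parameter variation $\hat{y}=y+\epsilon_1\eta_1+\epsilon_2\eta_2$, the use of the non-extremality of $y$ for $\mathcal{I}$ together with the fundamental lemma to produce $\eta_2$ with nonvanishing partial derivative, the implicit function theorem, the finite-dimensional Lagrange multiplier rule, and a final application of the fundamental lemma to the $\epsilon_1$-equation. Your closing remarks on differentiability in $(\epsilon_1,\epsilon_2)$ and on the hypotheses needed for the integration-by-parts formula (\ref{integracao:partes}) are sound points that the paper handles implicitly through its standing regularity assumptions.
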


\begin{proof}
Consider neighboring functions of the form
\begin{equation}
\label{admfunct}
\hat{y}=y+\epsilon_1\eta_1+\epsilon_2\eta_2,
\end{equation}
where for each $i\in\{1,2\}$ $\epsilon_i$ is a sufficiently small parameter,
$\eta_i$ have continuous left and right fractional derivatives,
and $\eta_i(a)=\eta_i(b)=0$.

First we will show that (\ref{admfunct}) has a subset of admissible
functions for the fractional isoperimetric problem. Consider the quantity
$$
\mathcal{I}(\hat{y})=\int_a^b g(x,y+\epsilon_1\eta_1+\epsilon_2\eta_2,
{_a\mathcal{D}_x^\alpha} y+\epsilon_1  {_a\mathcal{D}_x^\alpha} \eta_1
+\epsilon_2 {_a\mathcal{D}_x^\alpha} \eta_2,
{_x\mathcal{D}_b^\beta} y+\epsilon_1 {_x\mathcal{D}_b^\beta} \eta_1
+\epsilon_2 {_x\mathcal{D}_b^\beta}\eta_2)dx.
$$
Then we can regard $\mathcal{I}(\hat{y})$ as a function
of $\epsilon_1$ and $\epsilon_2$. Define
$\hat{I}(\epsilon_1,\epsilon_2)=\mathcal{I}(\hat{y})-l$.
Thus,
\begin{equation}
\label{implicit1}
\hat I(0,0)=0.
\end{equation}
On the other hand, we have
\begin{align}
\left.\frac{\partial \hat I}{\partial \epsilon_2} \right|_{(0,0)}
&=\int_a^b\left[ \frac{\partial g}{\partial y}\eta_2
+\frac{\partial g}{\partial u} {_a\mathcal{D}_x^\alpha} \eta_2
+ \frac{\partial g}{\partial v} {_x\mathcal{D}_b^\beta} \eta_2 \right]dx\nonumber\\
&=\int_a^b \left[ \frac{\partial g}{\partial y}
+{_x\mathcal{D}_b^\alpha} \frac{\partial g}{\partial u}
+ {_a\mathcal{D}_x^\beta} \frac{\partial g}{\partial v}\right]\eta_2 dx\label{rui0},
\end{align}
where (\ref{rui0}) follows from (\ref{integracao:partes}).
Since $y$ is not an extremal for $\mathcal{I}$,
by the fundamental lemma of the calculus of
variations (see, \textrm{e.g.}, \cite[p.~32]{Brunt}),
there exists a function $\eta_2$ such that
\begin{equation}
\label{implicit2}
\left.\frac{\partial \hat I}{\partial \epsilon_2} \right|_{(0,0)}\neq 0.
\end{equation}
Using (\ref{implicit1}) and (\ref{implicit2}), the implicit
function theorem asserts that there exists a function $\epsilon_2(\cdot)$,
defined in a neighborhood of zero, such that
$\hat I(\epsilon_1,\epsilon_2(\epsilon_1))=0$.
We are now in a position to derive the necessary condition (\ref{E-L}).
Consider the real function $\hat J(\epsilon_1,\epsilon_2)=\mathcal{J}(\hat{y})$.
By hypothesis, $\hat J$ has minimum (or maximum) at $(0,0)$ subject to the constraint
$\hat I(0,0)=0$, and we have proved that $\nabla \hat I(0,0)\neq \textbf{0}$.
We can appeal to the Lagrange multiplier rule
(see, \textrm{e.g.}, \cite[p.~77]{Brunt}) to assert
the existence of a number $\lambda$ such that
$\nabla(\hat J(0,0)-\lambda \hat I(0,0))=\textbf{0}$.
Repeating the calculations as before,
$$
\left.\frac{\partial \hat J}{\partial \epsilon_1} \right|_{(0,0)}
=\int_a^b \left[ \frac{\partial L}{\partial y}+{_x\mathcal{D}_b^\alpha}
\frac{\partial L}{\partial u} + {_a\mathcal{D}_x^\beta}
\frac{\partial L}{\partial v}   \right]\eta_1(x)dx
$$
and
$$
\left.\frac{\partial \hat I}{\partial \epsilon_1} \right|_{(0,0)}
=\int_a^b \left[ \frac{\partial g}{\partial y}+{_x\mathcal{D}_b^\alpha}
\frac{\partial g}{\partial u} + {_a\mathcal{D}_x^\beta}
\frac{\partial g}{\partial v}   \right]\eta_1(x)dx \, .
$$
Therefore, one has
\begin{equation}
\label{eq:fae1}
\int_a^b \left[ \frac{\partial L}{\partial y}+{_x\mathcal{D}_b^\alpha}
\frac{\partial L}{\partial u} + {_a\mathcal{D}_x^\beta} \frac{\partial L}{\partial v}
-\lambda \left( \frac{\partial g}{\partial y}+{_x\mathcal{D}_b^\alpha}
\frac{\partial g}{\partial u} + {_a\mathcal{D}_x^\beta}
\frac{\partial g}{\partial v}\right)\right]\eta_1(x)dx=0.
\end{equation}
Since (\ref{eq:fae1}) holds for any function $\eta_1$, we obtain (\ref{E-L}):
$$
\frac{\partial L}{\partial y}+{_x\mathcal{D}_b^\alpha} \frac{\partial L}{\partial u}
+ {_a\mathcal{D}_x^\beta} \frac{\partial L}{\partial v}
-\lambda \left( \frac{\partial g}{\partial y}+{_x\mathcal{D}_b^\alpha}
\frac{\partial g}{\partial u} + {_a\mathcal{D}_x^\beta}
\frac{\partial g}{\partial v}\right)=0.
$$
\end{proof}

\begin{rem}
Theorem~\ref{IsoPro} holds true in the case when $\alpha$
or $\beta$ are equal to $1$. Indeed, in the proof
we imposed the condition $\eta_2(a)=\eta_2(b)=0$,
and formula (\ref{integracao:partes}) is valid.
\end{rem}

\begin{example}
Let $\alpha$ be a given number in the interval $(0,1)$.
We consider the following fractional isoperimetric problem:
\begin{equation}
\label{eq:ex}
\begin{gathered}
\int_0^1(x^4+({_0\mathcal{D}_x^\alpha}y)^2)dx \longrightarrow \min\\
 \int_0^1 x^2{_0\mathcal{D}_x^\alpha}y \,dx=\frac{1}{5}\\
y(0) = 0 \, , \quad y(1) = \frac{2}{2\alpha+3\alpha^2+\alpha^3} \, .
\end{gathered}
\end{equation}
The augmented Lagrangian is
$$
F(x,y,{_0\mathcal{D}_x^\alpha}y,{_x\mathcal{D}_1^\beta}y)
=x^4+({_0\mathcal{D}_x^\alpha}y )^2-\lambda \, x^2 {_0\mathcal{D}_x^\alpha}y
$$
and it is a simple exercise to see that
\begin{equation}
\label{sol:ex:fip}
y(x) = \frac{1}{\Gamma(\alpha)}\int_0^x\frac{t^2}{(x-t)^{1-\alpha}}dt
= \displaystyle\frac{1}{\Gamma(\alpha)} \,
\frac{2x^{\alpha+2}}{2\alpha+3\alpha^2+\alpha^3}
\end{equation}
(i) is not an extremal for the isoperimetric functional,
(ii) satisfy ${_0\mathcal{D}_x^\alpha}y=x^2$,
(iii) (\ref{E-L}) holds for $\lambda=2$, \textrm{i.e.},
${_x\mathcal{D}_1^\alpha}(2 \,
{_0\mathcal{D}_x^\alpha}y-2x^2)=0$.
We remark that for $\alpha= 1$ \eqref{sol:ex:fip} gives
$y(x)=x^3/3$, which coincides with the solution of the
associated classical variational problem (Fig.~\ref{fig:1}).
%-----------------------------------------------------------
\begin{figure}[h]
\begin{center}
\pspicture(7,7)(0.5,0.5)
\rput(3,3){\includegraphics[scale=0.4]{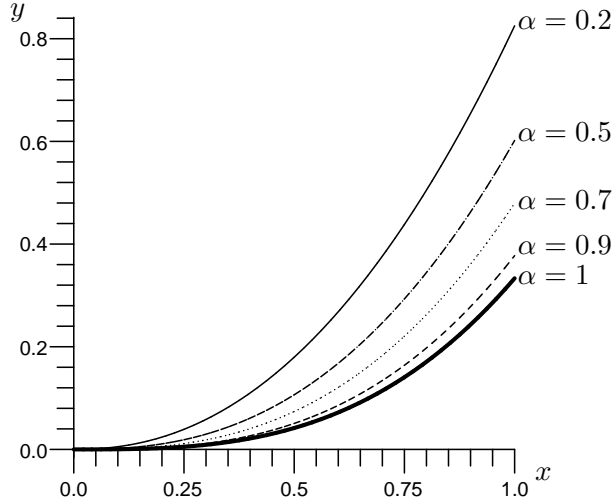}}
\uput{-1}[135](8.3,5.5){$\alpha=0.2$}
\uput{-1}[135](8.3,4){$\alpha=0.5$}
\uput{-1}[135](8.3,3.1){$\alpha=0.7$}
\uput{-1}[135](8.3,2.5){$\alpha=0.9$}
\uput{-1}[135](8,2.1){$\alpha=1$}
\uput{-1}[135](7.5,-0.5){$x$}
\uput{-1}[135](0.5,5.6){$y$}
\endpspicture
\end{center}
\caption{The fractional solution converges to the classical one
as $\alpha\to1$.}
\label{fig:1}
\end{figure}
%-----------------------------------------------------------
Indeed, for $\alpha \rightarrow 1$ our fractional problem
(\ref{eq:ex}) tends to the classical isoperimetric problem
of minimizing the functional $\int_0^1(x^4+(y')^2)dx$ subject
to the isoperimetric constraint $\int_0^1 x^2y' \,dx=\frac{1}{5}$
and the boundary conditions $y(0) = 0$ and $y(1)=1/3$.
Then, $F=x^4+(y')^2-\lambda x^2y'$ and the classical
Euler-Lagrange equation is
\begin{equation}
\label{eq:celeq:ex}
\frac{\partial F}{\partial y}
-\frac{d}{dx}\left(\frac{\partial F}{\partial y'}\right)=0
\Leftrightarrow -2y''+2\lambda x=0.
\end{equation}
The solution of (\ref{eq:celeq:ex}) subject to
$y(0) = 0$, $y(1)=1/3$, and $\int_0^1 x^2y' \,dx=\frac{1}{5}$
is $\lambda=2$ and $y=x^3/3$.
\end{example}

Introducing a multiplier $\lambda_0$ associated with
the cost functional (\ref{P0}), we can easily include
in Theorem~\ref{IsoPro} the situation when the solution
of the fractional isoperimetric problem defined by
(\ref{P0})-(\ref{bouncond})-(\ref{isocons}) is an extremal
for the fractional isoperimetric functional.
This is done in Theorem~\ref{IsoPro2}.

\begin{thm}
\label{IsoPro2}
If $y$ is a local minimizer or a local maximizer of (\ref{P0})
subject to the boundary conditions (\ref{bouncond})
and the isoperimetric constraint (\ref{isocons}),
then there exist two constants $\lambda_0$ and $\lambda$,
not both zero, such that
\begin{equation}
\label{E-L2}
\frac{\partial K}{\partial y}+{_x\mathcal{D}_b^\alpha}
\frac{\partial K}{\partial u}+{_a\mathcal{D}_x^\beta}\frac{\partial K}{\partial v}=0
\end{equation}
with $K=\lambda_0 L-\lambda g$.
\end{thm}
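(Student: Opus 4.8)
The plan is to reduce the abnormal statement to Theorem~\ref{IsoPro} by a case analysis on whether or not $y$ is an extremal of $\mathcal{I}$, reusing verbatim the set-up from the proof of that theorem. I would keep the two-parameter family of admissible variations $\hat{y}=y+\epsilon_1\eta_1+\epsilon_2\eta_2$ with $\eta_i(a)=\eta_i(b)=0$, and the associated real functions $\hat{J}(\epsilon_1,\epsilon_2)=\mathcal{J}(\hat{y})$ and $\hat{I}(\epsilon_1,\epsilon_2)=\mathcal{I}(\hat{y})-l$, so that $(0,0)$ is a local extremum of $\hat{J}$ subject to $\hat{I}=0$. The same integration-by-parts computation as in Theorem~\ref{IsoPro} shows that $\nabla\hat{I}(0,0)=\mathbf{0}$ holds \emph{precisely} when $y$ is an extremal for $\mathcal{I}$; this equivalence is what splits the argument cleanly into a normal and an abnormal branch.

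In the normal case, where $y$ is \emph{not} an extremal for $\mathcal{I}$, the hypotheses of Theorem~\ref{IsoPro} are met, and that theorem supplies a constant $\lambda$ for which (\ref{E-L}) holds with $F=L-\lambda g$. Taking $\lambda_0=1$ gives $K=\lambda_0 L-\lambda g=F$, so (\ref{E-L2}) coincides with (\ref{E-L}) and $(\lambda_0,\lambda)=(1,\lambda)\neq\mathbf{0}$. In the abnormal case, where $y$ \emph{is} an extremal for $\mathcal{I}$, i.e. $\frac{\partial g}{\partial y}+{_x\mathcal{D}_b^\alpha}\frac{\partial g}{\partial u}+{_a\mathcal{D}_x^\beta}\frac{\partial g}{\partial v}=0$ on $[a,b]$, I would simply set $\lambda_0=0$ and $\lambda=1$. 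With $K=-g$, equation (\ref{E-L2}) becomes exactly this extremal condition, which holds by assumption, and $(\lambda_0,\lambda)=(0,1)\neq\mathbf{0}$.

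The two cases are exhaustive, so either branch produces $(\lambda_0,\lambda)$, not both zero, satisfying (\ref{E-L2}). The one point genuinely requiring attention is the guarantee that the multipliers are not simultaneously zero: this is secured by fixing $\lambda_0=1$ in the normal case and $\lambda=1$ in the abnormal case. I expect no real obstacle beyond this bookkeeping, since the entire content is the classical Fritz John observation that dropping the regularity (non-extremal) assumption of Theorem~\ref{IsoPro} only forces the appearance of a possibly vanishing multiplier $\lambda_0$ on the cost functional; alternatively, one could bypass the case split altogether by invoking directly a general multiplier rule for the finite-dimensional constrained extremum of $\hat{J}$ subject to $\hat{I}=0$.
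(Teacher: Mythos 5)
Your proof is correct, but it takes a genuinely different route from the paper's. The paper stays inside the two-parameter variational framework and invokes, as a black box, the \emph{abnormal} Lagrange multiplier rule (Fritz John conditions) for the finite-dimensional problem of extremizing $\hat J$ subject to $\hat I=0$ (citing \cite[p.~82]{Brunt}): this yields $(\lambda_0,\lambda)\neq(0,0)$ with $\nabla(\lambda_0\hat J(0,0)-\lambda\hat I(0,0))=\mathbf{0}$, after which the same integration-by-parts and fundamental-lemma reasoning as in Theorem~\ref{IsoPro} gives \eqref{E-L2} --- this is precisely the alternative you mention in your closing sentence. Your route instead splits on whether $y$ is an extremal for $\mathcal{I}$: in the normal branch you quote Theorem~\ref{IsoPro} and set $\lambda_0=1$, so that $K=F$ and \eqref{E-L2} is \eqref{E-L}; in the abnormal branch you take $(\lambda_0,\lambda)=(0,1)$, $K=-g$, and \eqref{E-L2} reduces, by linearity of ${_x\mathcal{D}_b^\alpha}$ and ${_a\mathcal{D}_x^\beta}$, to the defining equation of an extremal of $\mathcal{I}$ (Definition~\ref{extermal}), which holds by assumption. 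Your argument is more elementary (no abnormal multiplier rule needed) and makes explicit the degeneracy inherent in the statement: in the abnormal case the conclusion carries no information about $L$ and does not even use the optimality of $y$; the paper's approach buys a uniform, case-free argument that runs structurally parallel to the proof of Theorem~\ref{IsoPro}. One minor imprecision in your write-up: the claim that $\nabla\hat I(0,0)=\mathbf{0}$ holds \emph{precisely} when $y$ is an extremal of $\mathcal{I}$ needs a quantifier --- if $y$ is extremal then $\nabla\hat I(0,0)=\mathbf{0}$ for \emph{every} choice of $\eta_1,\eta_2$, whereas if $y$ is not extremal one can only assert that \emph{some} choice makes $\nabla\hat I(0,0)\neq\mathbf{0}$. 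Since your case split is on the extremal property itself rather than on the vanishing of this gradient, the slip is harmless, but the sentence as written should be corrected or dropped.
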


\begin{proof}
Using the same notation as in the proof of Theorem~\ref{IsoPro},
we have that $(0,0)$ is an extremal of $\hat J$
subject to the constraint $\hat I=0$. Then, by the abnormal Lagrange multiplier rule
(see, \textrm{e.g.}, \cite[p.~82]{Brunt}) there exist two reals $\lambda_0$ and $\lambda$,
not both zero, such that $\nabla(\lambda_0 \hat J(0,0)-\lambda \hat I(0,0))=\textbf{0}$.
Therefore,
$$
\lambda_0 \displaystyle\left.\frac{\partial \hat J}{\partial \epsilon_1}\right|_{(0,0)}
-\lambda \displaystyle\left.\frac{\partial \hat I}{\partial \epsilon_1}\right|_{(0,0)}=0.
$$
Applying the same reasoning as in the proof of Theorem~\ref{IsoPro},
we end up with (\ref{E-L2}).
\end{proof}

% ---------------------------------------------------

\subsection{An extension}
\label{sec4}

In \cite{Atanackovic} a fractional functional
\begin{equation}
\label{eq:funct:Atanackovic}
\mathcal{L}(y)=\int_A^B L(x,y,\,{_a\mathcal{D}_x^\alpha} y)dx
\end{equation}
is considered with $[A,B]\subset[a,b]$, \textrm{i.e.},
with the lower bound of the integral not coinciding with the lower bound
of the fractional derivative. The main result of \cite{Atanackovic}
is a new Euler-Lagrange equation for the functional \eqref{eq:funct:Atanackovic}.
We now extend the techniques of \cite{Atanackovic}
to prove an Euler-Lagrange equation for functionals containing both left
and right Riemann-Liouville fractional derivatives, \textrm{i.e.},
for fractional functionals of the form
\begin{equation}
\label{Funct}
\mathcal{J}(y)=\int_A^B
L(x,y,\,{_a\mathcal{D}_x^\alpha} y,\, {_x\mathcal{D}_b^\beta}y)dx,
\end{equation}
where the integrand $L$ satisfies the same conditions as before.
Let $y$ be a local extremum of $\mathcal{J}$ such that $y(a)=y_a$ and $y(b)=y_b$,
and let $\hat y=y+\epsilon\eta$ with $\eta(a)=\eta(b)=0$. Consider the function
$\hat J (\epsilon)=\mathcal{J}(y+\epsilon\eta)$.
Since $\hat J (\epsilon)$ has a local extremum at $\epsilon=0$, then
\begin{equation*}
\begin{split}
0&= \int_A^B \left[\frac{\partial L}{\partial y} \cdot \eta
+\frac{\partial L}{\partial u}\cdot {_a\mathcal{D}_x^\alpha} \eta
+ \frac{\partial L}{\partial v}\cdot {_x\mathcal{D}_b^\beta}\eta\right]dx\\
&=  \int_A^B \frac{\partial L}{\partial y} \cdot \eta dx
+\left[\int_a^B \frac{\partial L}{\partial u}\cdot {_a\mathcal{D}_x^\alpha} \eta dx
- \int_a^A \frac{\partial L}{\partial u}\cdot {_a\mathcal{D}_x^\alpha} \eta dx \right]\\
&\quad +  \left[ \int_A^b \frac{\partial L}{\partial v}
\cdot {_x\mathcal{D}_b^\beta}\eta dx - \int_B^b \frac{\partial L}{\partial v}
\cdot {_x\mathcal{D}_b^\beta}\eta dx  \right]\\
&= \int_A^B \frac{\partial L}{\partial y} \cdot \eta dx
+\left[\int_a^B \eta \cdot {_x\mathcal{D}_B^\alpha}\frac{\partial L}{\partial u} dx
- \int_a^A \eta \cdot {_x\mathcal{D}_A^\alpha}\frac{\partial L}{\partial u} dx \right]\\
&\quad + \left[ \int_A^b \eta
\cdot {_A\mathcal{D}_x^\beta}\frac{\partial L}{\partial v} dx
- \int_B^b\eta \cdot {_B\mathcal{D}_x^\beta}\frac{\partial L}{\partial v} dx  \right].
\end{split}
\end{equation*}
Continuing in a similar way,
\begin{equation*}
\begin{split}
0&= \int_A^B \frac{\partial L}{\partial y}
\cdot \eta dx +\left[\int_a^A \eta
\cdot {_x\mathcal{D}_B^\alpha}\frac{\partial L}{\partial u} dx
+ \int_A^B \eta \cdot {_x\mathcal{D}_B^\alpha}\frac{\partial L}{\partial u} dx
- \int_a^A \eta \cdot {_x\mathcal{D}_A^\alpha}\frac{\partial L}{\partial u} dx \right]\\
&\quad + \left[ \int_A^B \eta
\cdot {_A\mathcal{D}_x^\beta}\frac{\partial L}{\partial v} dx
+\int_B^b \eta \cdot {_A\mathcal{D}_x^\beta}\frac{\partial L}{\partial v} dx
- \int_B^b\eta \cdot {_B\mathcal{D}_x^\beta}\frac{\partial L}{\partial v} dx  \right]\\
&= \int_a^A\left[ {_x\mathcal{D}_B^\alpha}\frac{\partial L}{\partial u}
- {_x\mathcal{D}_A^\alpha}\frac{\partial L}{\partial u} \right] \eta dx
+ \int_A^B \left[ \frac{\partial L}{\partial y}
+{_x\mathcal{D}_B^\alpha}\frac{\partial L}{\partial u}
+ {_A\mathcal{D}_x^\beta}\frac{\partial L}{\partial v}\right]\eta dx\\
&\quad + \int_B^b \left[{_A\mathcal{D}_x^\beta}\frac{\partial L}{\partial v}
- {_B\mathcal{D}_x^\beta}\frac{\partial L}{\partial v}\right]\eta dx \, .
\end{split}
\end{equation*}
Let $\eta_1:[a,A]\to\mathbb R$ be any function
satisfying $\eta_1(a)=0$, and $\eta$ be given by
$$
\eta(x)=\left\{
\begin{array}{ll}
\eta_1(x) & \mbox{ if } x\in[a,A] \, ,\\
0 & \mbox{ elsewhere. }\\
\end{array}\right.
$$
Therefore,
$$
0= \int_a^A\left[ {_x\mathcal{D}_B^\alpha}\frac{\partial L}{\partial u}
- {_x\mathcal{D}_A^\alpha}\frac{\partial L}{\partial u} \right] \eta_1 dx.
$$
By the arbitrariness of $\eta_1$ and the fundamental lemma of calculus of variations,
$$
{_x\mathcal{D}_B^\alpha}\frac{\partial L}{\partial u}
- {_x\mathcal{D}_A^\alpha}\frac{\partial L}{\partial u}=0 \mbox{ for all } x\in[a,A].
$$
Analogously, we have
$$
\frac{\partial L}{\partial y}+{_x\mathcal{D}_B^\alpha}\frac{\partial L}{\partial u}
+ {_A\mathcal{D}_x^\beta}\frac{\partial L}{\partial v}=0 \mbox{ for all } x\in[A,B] \, ,
$$
and
$$
{_A\mathcal{D}_x^\beta}\frac{\partial L}{\partial v}
- {_B\mathcal{D}_x^\beta}\frac{\partial L}{\partial v}
=0 \mbox{ for all } x\in[B,b].
$$
We have just proved the following.

\begin{thm}
\label{ext:ELFE}
Let $y$ be a local extremizer of (\ref{Funct}).
Then, $y$ satisfies the following equations:
$$
\left\{\begin{array}{ll}
\displaystyle \frac{\partial L}{\partial y}
+{_x\mathcal{D}_B^\alpha} \frac{\partial L}{\partial u}
+{_A\mathcal{D}_x^\beta}\frac{\partial L}{\partial v}=0
& \mbox{ for all } x \in [A,B]\, ,\\[0.3cm]
\displaystyle {_x\mathcal{D}_B^\alpha} \frac{\partial L}{\partial u}
-{_x\mathcal{D}_A^\alpha}\frac{\partial L}{\partial u}=0
& \mbox{ for all } x \in [a,A]\, , \\[0.3cm]
\displaystyle {_A\mathcal{D}_x^\beta} \frac{\partial L}{\partial v}
-{_B\mathcal{D}_x^\beta}\frac{\partial L}{\partial v}=0 & \mbox{ for all } x \in [B,b].
\end{array}\right.
$$
\end{thm}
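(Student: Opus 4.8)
The plan is to follow the standard first-variation scheme, but with an interval decomposition designed to repair the mismatch between the integration limits $A,B$ and the limits $a,b$ of the fractional derivatives. I would begin with a one-parameter family $\hat y = y+\epsilon\eta$, where $\eta(a)=\eta(b)=0$ and $\eta$ has the required continuous fractional derivatives, set $\hat J(\epsilon)=\mathcal{J}(\hat y)$, and use the necessary condition $\hat J'(0)=0$ to obtain
$$
0=\int_A^B\left[\frac{\partial L}{\partial y}\,\eta+\frac{\partial L}{\partial u}\,{_a\mathcal{D}_x^\alpha}\eta+\frac{\partial L}{\partial v}\,{_x\mathcal{D}_b^\beta}\eta\right]dx.
$$
The term in $\partial L/\partial y$ already has $\eta$ isolated and will survive only on $[A,B]$, which is why it appears in just one of the three equations.

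The crucial step is to make the integration-by-parts formula \eqref{integracao:partes} applicable. Since that formula demands that the lower (resp.\ upper) limit of the left (resp.\ right) derivative coincide with the lower (resp.\ upper) limit of the integral, I would rewrite the two derivative terms over full subintervals anchored at $a$ and $b$:
$$
\int_A^B \frac{\partial L}{\partial u}\,{_a\mathcal{D}_x^\alpha}\eta\,dx=\int_a^B \frac{\partial L}{\partial u}\,{_a\mathcal{D}_x^\alpha}\eta\,dx-\int_a^A \frac{\partial L}{\partial u}\,{_a\mathcal{D}_x^\alpha}\eta\,dx,
$$
and symmetrically $\int_A^B=\int_A^b-\int_B^b$ for the $v$-term. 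On each of these four pieces the limits now match, so \eqref{integracao:partes} transfers the fractional derivative off $\eta$, producing ${_x\mathcal{D}_B^\alpha}$ and ${_x\mathcal{D}_A^\alpha}$ acting on $\partial L/\partial u$, and ${_A\mathcal{D}_x^\beta}$ and ${_B\mathcal{D}_x^\beta}$ acting on $\partial L/\partial v$.

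After the transfer I would re-split the resulting integrals over the three consecutive subintervals $[a,A]$, $[A,B]$, $[B,b]$ and collect the coefficient of $\eta$ on each. This is pure bookkeeping and yields a single identity $\int_a^A[\cdots]\eta\,dx+\int_A^B[\cdots]\eta\,dx+\int_B^b[\cdots]\eta\,dx=0$, whose three bracketed expressions are precisely the left-hand sides claimed in the theorem. Finally I would invoke the fundamental lemma of the calculus of variations three separate times, each time taking $\eta$ supported in the interior of a single subinterval (and vanishing, together with its fractional derivatives, on the other two and at $a,b$), so that the surviving integral forces the corresponding bracket to vanish pointwise on its interval.

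The main obstacle is not conceptual but the careful accounting of limits in the interval splitting: I must verify that the continuity hypotheses on $L$ and on the fractional derivatives of $\partial L/\partial u$ and $\partial L/\partial v$ legitimize \eqref{integracao:partes} on every piece, and that the localized test functions remain admissible (continuous fractional derivatives, vanishing at the endpoints) so that each application of the fundamental lemma is valid.
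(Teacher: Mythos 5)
Your proposal is correct and follows essentially the same route as the paper: first variation with $\hat y=y+\epsilon\eta$, the splittings $\int_A^B=\int_a^B-\int_a^A$ and $\int_A^B=\int_A^b-\int_B^b$ so that formula (\ref{integracao:partes}) applies on each piece, regrouping over $[a,A]$, $[A,B]$, $[B,b]$, and the fundamental lemma applied with test functions supported in one subinterval at a time. One small simplification you could note: since the fractional derivatives of $\eta$ have already been transferred onto $\partial L/\partial u$ and $\partial L/\partial v$ before localization, only $\eta$ itself appears in the final identity, so the localized test functions need only vanish pointwise outside their subinterval --- no condition on their fractional derivatives there is required.
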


\begin{rem}
Theorem~\ref{ext:ELFE} simplifies to the result
proved in \cite{Atanackovic}
in the case the Lagrangian $L$ in (\ref{Funct})
does not depend on the right Riemann-Liouville
fractional derivative ${_x\mathcal{D}_b^\beta}y$.
\end{rem}

We will study now the fractional isoperimetric problem for functionals
of type (\ref{Funct}) subject to an integral constraint
\begin{equation}
\label{Constraint}
\mathcal{I}(y)=\int_A^B g(x,y,\, {_a\mathcal{D}_x^\alpha} y,\,
{_x\mathcal{D}_b^\beta} y)dx=l \, .
\end{equation}

\begin{defn}
We say that $y$ is an \emph{extremal} for functional
$\mathcal{I}$ given in (\ref{Constraint}) if
$$
\frac{\partial g}{\partial y}
+{_x\mathcal{D}_B^\alpha} \frac{\partial g}{\partial u}
+{_A\mathcal{D}_x^\beta}\frac{\partial g}{\partial v}=0
\quad \mbox{ for all } x\in[A,B].
$$
\end{defn}

\begin{thm}
\label{thm:gt:n}
Let $y$ give a local minimum or a local maximum to the fractional
functional (\ref{Funct}) subject to the constraint (\ref{Constraint}).
If $y$ is not an extremal for $\mathcal{I}$,
then there exists a constant $\lambda$ such that
\begin{equation}
\label{IsoPro3}
\left\{\begin{array}{ll}
\displaystyle \frac{\partial F}{\partial y}
+{_x\mathcal{D}_B^\alpha} \frac{\partial F}{\partial u}
+{_A\mathcal{D}_x^\beta}\frac{\partial F}{\partial v}=0
& \mbox{ for all } x \in [A,B]\vspace{0.1cm}\\
\displaystyle {_x\mathcal{D}_B^\alpha} \frac{\partial F}{\partial u}
-{_x\mathcal{D}_A^\alpha}\frac{\partial F}{\partial u}=0
& \mbox{ for all } x \in [a,A]\vspace{0.2cm}\\
\displaystyle {_A\mathcal{D}_x^\beta} \frac{\partial F}{\partial v}
-{_B\mathcal{D}_x^\beta}\frac{\partial F}{\partial v}=0
& \mbox{ for all } x \in [B,b]
\end{array}\right.
\end{equation}
with $F=L-\lambda g$.
\end{thm}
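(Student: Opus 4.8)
The plan is to fuse the two ingredients already developed in the excerpt: the two-parameter variation together with the Lagrange multiplier rule used in Theorem~\ref{IsoPro}, and the three-interval integration-by-parts decomposition carried out in the paragraphs preceding Theorem~\ref{ext:ELFE}. I would begin from neighbouring functions $\hat y = y + \epsilon_1\eta_1 + \epsilon_2\eta_2$, where the $\eta_i$ possess continuous left and right fractional derivatives and satisfy $\eta_i(a)=\eta_i(b)=0$. Setting $\hat I(\epsilon_1,\epsilon_2)=\mathcal I(\hat y)-l$ and $\hat J(\epsilon_1,\epsilon_2)=\mathcal J(\hat y)$, both become genuine functions of two real parameters once $\eta_1,\eta_2$ are fixed, so that the implicit function theorem and the Lagrange multiplier rule apply in $\mathbb R^2$ precisely as in Theorem~\ref{IsoPro}, and $\hat I(0,0)=0$.

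The first key step is to produce a direction $\eta_2$ with $\partial\hat I/\partial\epsilon_2|_{(0,0)}\neq 0$. Computing the first variation of $\mathcal I$ and applying the integration-by-parts manipulations of the derivation of Theorem~\ref{ext:ELFE}, now to $g$ in place of $L$, yields
$$
\left.\frac{\partial\hat I}{\partial\epsilon_2}\right|_{(0,0)}
=\int_a^A\!\Big[{_x\mathcal{D}_B^\alpha}\tfrac{\partial g}{\partial u}-{_x\mathcal{D}_A^\alpha}\tfrac{\partial g}{\partial u}\Big]\eta_2\,dx
+\int_A^B\!\Big[\tfrac{\partial g}{\partial y}+{_x\mathcal{D}_B^\alpha}\tfrac{\partial g}{\partial u}+{_A\mathcal{D}_x^\beta}\tfrac{\partial g}{\partial v}\Big]\eta_2\,dx
+\int_B^b\!\Big[{_A\mathcal{D}_x^\beta}\tfrac{\partial g}{\partial v}-{_B\mathcal{D}_x^\beta}\tfrac{\partial g}{\partial v}\Big]\eta_2\,dx.
$$
I would then restrict attention to variations $\eta_2$ supported in $[A,B]$ (vanishing on $[a,A]$ and on $[B,b]$), which annihilates the two outer integrals and leaves exactly the integral whose bracket is the extremality expression for $\mathcal I$ from the definition preceding the statement. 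Since $y$ is by hypothesis \emph{not} an extremal, that bracket is continuous and not identically zero on $[A,B]$, so the fundamental lemma of the calculus of variations furnishes an admissible $\eta_2$ making this integral nonzero. Hence $\nabla\hat I(0,0)\neq\mathbf 0$; the implicit function theorem then yields $\epsilon_2=\epsilon_2(\epsilon_1)$ keeping $\hat I=0$ near the origin, and the Lagrange multiplier rule produces a constant $\lambda$ with $\nabla(\hat J-\lambda\hat I)(0,0)=\mathbf 0$, in particular $\partial(\hat J-\lambda\hat I)/\partial\epsilon_1|_{(0,0)}=0$.

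To finish, I would evaluate this last identity by the same three-interval decomposition, now with the arbitrary direction $\eta_1$ and the augmented Lagrangian $F=L-\lambda g$, obtaining
$$
0=\int_a^A\!\Big[{_x\mathcal{D}_B^\alpha}\tfrac{\partial F}{\partial u}-{_x\mathcal{D}_A^\alpha}\tfrac{\partial F}{\partial u}\Big]\eta_1\,dx
+\int_A^B\!\Big[\tfrac{\partial F}{\partial y}+{_x\mathcal{D}_B^\alpha}\tfrac{\partial F}{\partial u}+{_A\mathcal{D}_x^\beta}\tfrac{\partial F}{\partial v}\Big]\eta_1\,dx
+\int_B^b\!\Big[{_A\mathcal{D}_x^\beta}\tfrac{\partial F}{\partial v}-{_B\mathcal{D}_x^\beta}\tfrac{\partial F}{\partial v}\Big]\eta_1\,dx.
$$
Choosing $\eta_1$ supported successively in each of the three subintervals and invoking the fundamental lemma three times then gives the three equations of (\ref{IsoPro3}). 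The genuinely delicate point is the localization used above: the non-extremality hypothesis constrains $y$ only on $[A,B]$, so the argument relies on the fact that (\ref{integracao:partes}) requires no boundary conditions when $\alpha,\beta\in(0,1)$ and may thus be applied on each of the auxiliary intervals $[a,A]$, $[a,B]$, $[A,b]$, $[B,b]$; a variation supported in $[A,B]$ then kills the two outer integrals and isolates precisely the middle bracket, which is exactly the extremality expression that the hypothesis forbids from vanishing identically.
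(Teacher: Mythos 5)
Your proposal follows essentially the same route as the paper's own proof: the two-parameter variation $\hat y=y+\epsilon_1\eta_1+\epsilon_2\eta_2$, the three-interval integration-by-parts decomposition of $\partial\hat I/\partial\epsilon_2$ and $\partial(\hat J-\lambda\hat I)/\partial\epsilon_1$, the implicit function theorem, and the Lagrange multiplier rule. In fact you make explicit two steps the paper leaves implicit --- choosing $\eta_2$ supported in $[A,B]$ so that non-extremality of $y$ (which constrains $y$ only on $[A,B]$) genuinely yields $\nabla\hat I(0,0)\neq\mathbf{0}$, and localizing $\eta_1$ successively in $[a,A]$, $[A,B]$, $[B,b]$ to extract the three equations of (\ref{IsoPro3}) --- so your write-up is a correct and slightly more careful version of the paper's argument.
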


\begin{proof}
Consider a variation
$(\epsilon_1,\epsilon_2)\mapsto \hat y =y+\epsilon_1 \eta_1+\epsilon_2\eta_2$
where $\eta_1(a)=\eta_1(b)=\eta_2(a)=\eta_2(b)=0$. Let
$$
\hat I(\epsilon_1,\epsilon_2)=\int_A^B g(x,\hat y,\,
{_a\mathcal{D}_x^\alpha}\hat y,\,{_x\mathcal{D}_b^\beta}\hat y)dx-l.
$$
Then, $\hat I(0,0)=0$ and
$$
\begin{array}{ll}
\displaystyle \left.\frac{\partial \hat I}{\partial \epsilon_2} \right|_{(0,0)}
&=\displaystyle\int_A^B\left[ \frac{\partial g}{\partial y}\eta_2
+\frac{\partial g}{\partial u} {_a\mathcal{D}_x^\alpha} \eta_2
+ \frac{\partial g}{\partial v} {_x\mathcal{D}_b^\beta} \eta_2 \right]dx\\
&= \displaystyle \int_a^A\left[ {_x\mathcal{D}_B^\alpha}\frac{\partial g}{\partial u}
- {_x\mathcal{D}_A^\alpha}\frac{\partial g}{\partial u} \right] \eta_2 dx
+ \int_A^B \left[ \frac{\partial g}{\partial y}
+{_x\mathcal{D}_B^\alpha}\frac{\partial g}{\partial u}
+ {_A\mathcal{D}_x^\beta}\frac{\partial g}{\partial v}\right]\eta_2 dx\\
&\quad +\displaystyle \int_B^b \left[{_A\mathcal{D}_x^\beta}\frac{\partial g}{\partial v}
- {_B\mathcal{D}_x^\beta}\frac{\partial g}{\partial v}\right]\eta_2 dx.
\end{array}
$$
Since $y$ is not an extremal for $\mathcal{I}$, there exists a function $\eta_2$ such that
$\left.\frac{\partial \hat I}{\partial \epsilon_2} \right|_{(0,0)}\neq 0$.
By the implicit function theorem, there exists a subset of curves
$\{ y+\epsilon_1 \eta_1 + \epsilon_2\eta_2 \, | \, (\epsilon_1,\epsilon_2)\in \mathbb R^2 \}$
admissible for the fractional isoperimetric problem.
Let $\hat J(\epsilon_1,\epsilon_2)=\mathcal{J}(\hat{y})$.
Then, there exists a real $\lambda$ such that
$\nabla(\hat J(0,0)-\lambda \hat I(0,0))=\textbf{0}$. Because
$$
\begin{array}{ll}
\displaystyle\left.\frac{\partial \hat J}{\partial \epsilon_1} \right|_{(0,0)}
&=\displaystyle \int_a^A\left[ {_x\mathcal{D}_B^\alpha}\frac{\partial L}{\partial u}
- {_x\mathcal{D}_A^\alpha}\frac{\partial L}{\partial u} \right] \eta_1 dx
+ \int_A^B \left[ \frac{\partial L}{\partial y}+{_x\mathcal{D}_B^\alpha}\frac{\partial L}{\partial u}
+ {_A\mathcal{D}_x^\beta}\frac{\partial L}{\partial v}\right]\eta_1 dx\\
&\quad +\displaystyle\int_B^b \left[{_A\mathcal{D}_x^\beta}\frac{\partial L}{\partial v}
- {_B\mathcal{D}_x^\beta}\frac{\partial L}{\partial v}\right]\eta_1 dx,
\end{array}
$$
$$
\begin{array}{ll}
\displaystyle\left.\frac{\partial \hat I}{\partial \epsilon_1}
\right|_{(0,0)}&=\displaystyle \int_a^A\left[ {_x\mathcal{D}_B^\alpha}\frac{\partial g}{\partial u}
- {_x\mathcal{D}_A^\alpha}\frac{\partial g}{\partial u} \right] \eta_1 dx
+ \int_A^B \left[ \frac{\partial g}{\partial y}+{_x\mathcal{D}_B^\alpha}\frac{\partial g}{\partial u}
+ {_A\mathcal{D}_x^\beta}\frac{\partial g}{\partial v}\right]\eta_1 dx\\
&\quad +\displaystyle\int_B^b \left[{_A\mathcal{D}_x^\beta}\frac{\partial g}{\partial v}
- {_B\mathcal{D}_x^\beta}\frac{\partial g}{\partial v}\right]\eta_1 dx,
\end{array}
$$
$$
\left.\frac{\partial \hat J}{\partial \epsilon_1} \right|_{(0,0)}
-\lambda \left.\frac{\partial \hat I}{\partial \epsilon_1} \right|_{(0,0)}=0 ,
$$
and $\eta_1$ is an arbitrary function, it follows  (\ref{IsoPro3}).
\end{proof}

Similarly as before, we can include in Theorem~\ref{thm:gt:n} the situation
when the solution $y$ is an extremal for $\mathcal{I}$ (abnormal extremizer).
For that we introduce a new multiplier $\lambda_0$ that will be zero
when the solution $y$ is an extremal for $\mathcal{I}$ and one otherwise.

\begin{thm}
If $y$ is a local minimizer or a local maximizer of (\ref{Funct})
subject to the isoperimetric constraint (\ref{Constraint}),
then there exist two constants $\lambda_0$ and $\lambda$,
not both zero, such that
$$
\left\{\begin{array}{ll}
\displaystyle\frac{\partial K}{\partial y}+{_x\mathcal{D}_B^\alpha} \frac{\partial K}{\partial u}
+{_A\mathcal{D}_x^\beta}\frac{\partial K}{\partial v}=0 & \mbox{ for all } x \in [A,B]\vspace{0.1cm}\\
\displaystyle{_x\mathcal{D}_B^\alpha} \frac{\partial K}{\partial u}
-{_x\mathcal{D}_A^\alpha}\frac{\partial K}{\partial u}=0 & \mbox{ for all } x \in [a,A]\vspace{0.2cm}\\
\displaystyle{_A\mathcal{D}_x^\beta} \frac{\partial K}{\partial v}
-{_B\mathcal{D}_x^\beta}\frac{\partial K}{\partial v}=0 & \mbox{ for all } x \in [B,b]\\
\end{array}\right.
$$
with $K=\lambda_0 L-\lambda g$.
\end{thm}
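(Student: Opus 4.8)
The plan is to run the same argument that promoted Theorem~\ref{IsoPro} to its abnormal counterpart Theorem~\ref{IsoPro2}, but now on top of the interval-splitting machinery already developed in the proof of Theorem~\ref{thm:gt:n}. First I would reuse the two-parameter family $\hat y = y + \epsilon_1\eta_1 + \epsilon_2\eta_2$ with $\eta_i(a)=\eta_i(b)=0$, and the functions $\hat J(\epsilon_1,\epsilon_2)=\mathcal{J}(\hat y)$ and $\hat I(\epsilon_1,\epsilon_2)=\mathcal{I}(\hat y)-l$, exactly as set up there. Since $y$ is assumed to be a local extremizer of $\mathcal{J}$ subject to $\mathcal{I}=l$, the point $(0,0)$ is a constrained local extremum of $\hat J$ subject to $\hat I=0$.

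The crucial step is to invoke the \emph{abnormal} Lagrange multiplier rule (\cite[p.~82]{Brunt}), which, unlike the normal rule used in Theorem~\ref{thm:gt:n}, does not require the regularity hypothesis $\nabla\hat I(0,0)\neq\mathbf{0}$. It therefore applies uniformly whether or not $y$ is an extremal for $\mathcal{I}$, and yields two reals $\lambda_0,\lambda$, not both zero, with $\nabla\big(\lambda_0\hat J(0,0)-\lambda\hat I(0,0)\big)=\mathbf{0}$. In particular, reading off the $\epsilon_1$-component gives
$$
\lambda_0\left.\frac{\partial \hat J}{\partial \epsilon_1}\right|_{(0,0)}
-\lambda\left.\frac{\partial \hat I}{\partial \epsilon_1}\right|_{(0,0)}=0.
$$
When $\nabla\hat I(0,0)\neq\mathbf{0}$ one may take $\lambda_0=1$ and recover Theorem~\ref{thm:gt:n}; when $y$ is an extremal for $\mathcal{I}$ the rule forces the abnormal case $\lambda_0=0$, $\lambda\neq0$, which is precisely the situation this theorem is meant to capture.

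The remaining work is purely a reprise of the computation in Theorem~\ref{thm:gt:n}. I would substitute the already-derived expressions for $\partial\hat J/\partial\epsilon_1|_{(0,0)}$ and $\partial\hat I/\partial\epsilon_1|_{(0,0)}$, each of which splits into integrals over $[a,A]$, $[A,B]$, and $[B,b]$ via the integration-by-parts formula (\ref{integracao:partes}). Combining them produces a single identity $\int$ of $[\,\cdots\,]\,\eta_1\,dx$ over $[a,b]$ with integrand $\lambda_0 L-\lambda g = K$. Choosing $\eta_1$ supported successively in the open interior of each of the three subintervals (with $\eta_1$ vanishing at the endpoints), the fundamental lemma of the calculus of variations (\cite[p.~32]{Brunt}) then isolates the three stationarity equations for $K$ listed in the statement. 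The only point needing care is the same one as in Theorem~\ref{ext:ELFE}: the test functions must be chosen compactly supported in each piece so that the boundary contributions from the fractional integration by parts vanish and the three interval conditions decouple; once that bookkeeping is in place, the conclusion follows verbatim.
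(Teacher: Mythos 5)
Your proposal is correct and follows essentially the same route the paper intends: the paper states this theorem without an explicit proof (``Similarly as before\ldots''), the intended argument being exactly your combination of the abnormal Lagrange multiplier rule from the proof of Theorem~\ref{IsoPro2} with the three-interval computations from the proof of Theorem~\ref{thm:gt:n}. One trivial remark: the fractional integration-by-parts formula (\ref{integracao:partes}) produces no boundary terms for $0<\alpha<1$, so the restriction of $\eta_1$ to each subinterval is needed only to decouple the three equations via the fundamental lemma, not to kill boundary contributions.
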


%-----------------------------------------------------

\subsection{Dependence on a parameter}
\label{sec5}

Consider the following fractional problem of the calculus
of variations: to extremize the functional
$$
\Psi(y)=\int_0^1\left[\frac{x^\alpha}{\Gamma(\alpha+1)}({_0D^\alpha_x}y)^2
-2\overline y \, {_0D^\alpha_x}y  \right]^2dx
$$
when subject to the boundary conditions
$$
y(0)=0, \quad y(1)=1.
$$
Here, $\overline y := x^\alpha, x\in[0,1]$.
The fractional Euler-Lagrange associated to this problem is
\begin{equation}
\label{FELequation}
{_xD^\alpha_1}\left(2 \left[\frac{x^\alpha}{\Gamma(\alpha+1)}({_0D^\alpha_x}y)^2
-2\overline y \, {_0D^\alpha_x} y \right]
\cdot \left[\frac{2 x^\alpha}{\Gamma(\alpha+1)}{_0D^\alpha_x}y
-2\overline y \right]\right)=0.
\end{equation}
Replacing $y$ by $\overline y$, and since ${_0D^\alpha_x}\overline y=\Gamma(\alpha+1)$,
we conclude that $\overline y$ is a solution of (\ref{FELequation}).

Consider now the following problem: what is the order of the derivative $\alpha$,
such that $\Psi(\overline y)$ attains a maximum or a minimum? In other words,
find the extremizers for $\psi(\alpha)=\Psi(\overline y)$.
Direct computations show that
$$
\psi(\alpha)=\int_0^1\left[x^\alpha \Gamma(\alpha+1)\right]^2dx.
$$
% ------------------------
\begin{figure}[ht]
\centering
\includegraphics[width=8cm]{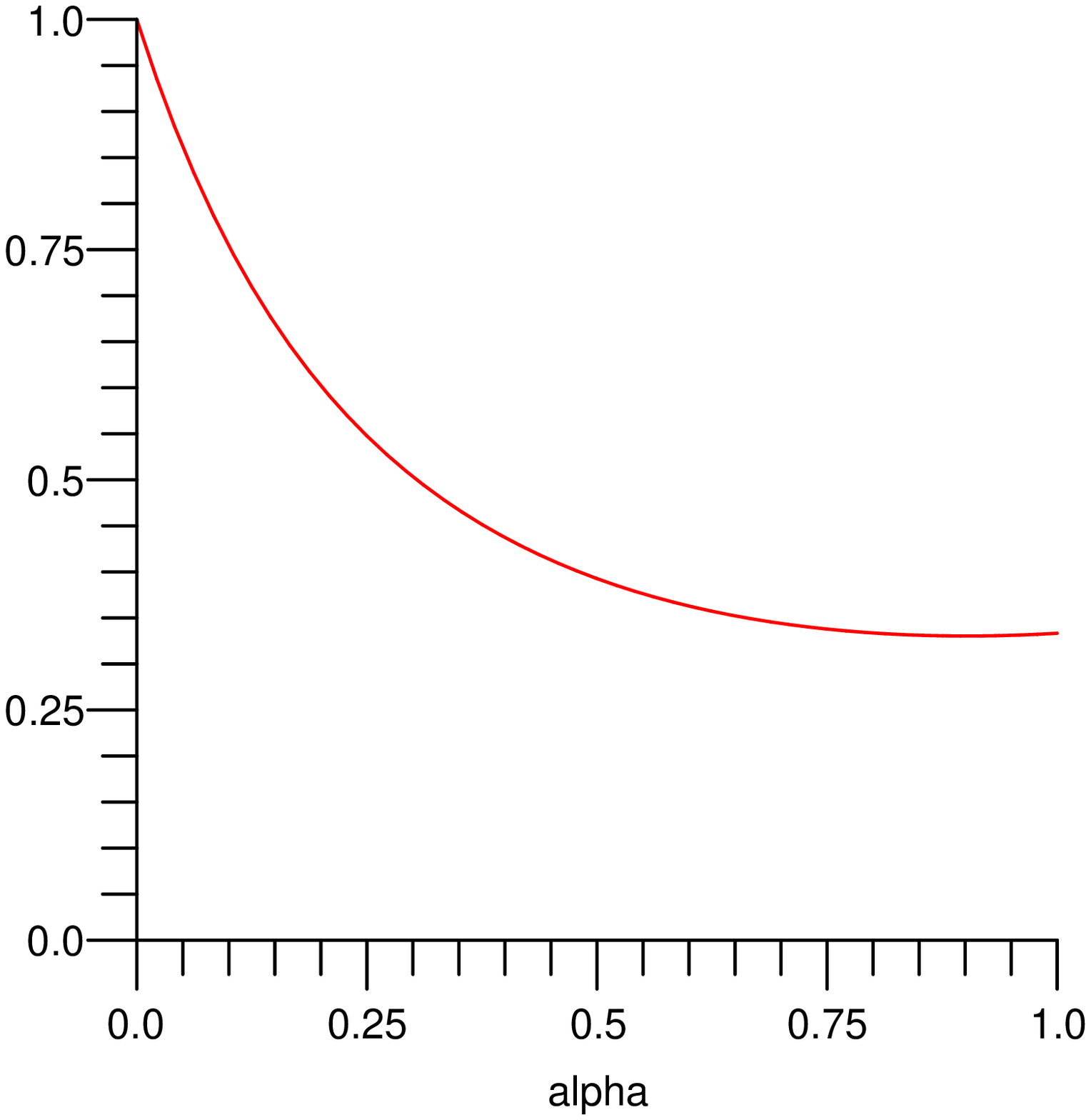}\\
\vspace{-1cm}\caption{Graph of $\Psi(\overline y)$ for
$\alpha \in [0,1]$}
\end{figure}
% ------------------------
Evaluating its derivative,
$$
\begin{array}{ll}
\psi'(\alpha)
& = \displaystyle \int_0^1\frac{d}{d\alpha}\left[x^\alpha \Gamma(\alpha+1)\right]^2dx\\
&\\
& = \displaystyle\int_0^1 2 x^\alpha \Gamma(\alpha+1) \left[x^\alpha\ln x \,
\Gamma(\alpha+1)+x^\alpha \int_0^\infty t^\alpha \ln t \, e^{-t}dt\right] dx.
\end{array}
$$
We have that $\alpha\approx0.901$ is a solution of the equation $\psi'(\alpha)=0$,
and such value is precisely where $\Psi(\overline y)$ attains a minimum.

More generally, consider the functional
\begin{equation}
\label{eq:prb:par}
\Phi(y,\alpha)=\int_a^b L(x,y(x),\,{_aD_x^\alpha} y(x))dx.
\end{equation}
Functional (\ref{eq:prb:par}) contains the left Riemann-Liouville derivative only,
but we can consider functionals containing right Riemann-Liouville derivatives or both
in a similar way.
Let $h$ be a curve such that $h(a)=h(b)=0$, $\delta$ be a real number,
and $(y,\alpha)$ be an extremal for $\Phi$. Then,
\begin{multline*}
\Phi(y+h,\alpha+\delta)-\Phi(y,\alpha)\\
=\int_a^b \frac{\partial L}{\partial y} \cdot h + \frac{\partial L}{\partial u}
\cdot {_aD_x^{\alpha+\delta}} h+ \frac{\partial L}{\partial u}
\cdot ({_aD_x^{\alpha+\delta}} y-{_aD_x^{\alpha}} y) dx+O|(h,\delta)|^2 \, .
\end{multline*}

For $\delta=0$, using the fractional integration by parts formula
and the fundamental lemma of the calculus of variations,
we obtain the known fractional Euler-Lagrange equation:
$$
\frac{\partial L}{\partial y}(x,y(x),\,{_aD_x^\alpha} y(x))
+ {_xD_b^{\alpha}} \frac{\partial L}{\partial u}(x,y(x),\,{_aD_x^\alpha} y(x))=0.
$$
For $h=0$, we obtain the relation
$$
\int_a^b \frac{\partial L}{\partial u}(x,y(x),\,{_aD_x^\alpha} y(x))\phi'(\alpha)dx=0,
$$
where $\phi(\alpha)={_aD_x^\alpha} y(x)$. In summary, we have:

\begin{thm}
If $(y,\alpha)$ is an extremal of $\Phi$ given by \eqref{eq:prb:par},
satisfying the boundary conditions $y(a)=0$ and $y(b)=y_b$,
then $y$ satisfies the system
\begin{equation}
\label{system}
\left\{\begin{array}{l}
\displaystyle \frac{\partial L}{\partial y}(x,y(x),\,{_aD_x^\alpha} y(x))
+ {_xD_b^{\alpha}} \frac{\partial L}{\partial u}(x,y(x),\,{_aD_x^\alpha} y(x))=0\\
\displaystyle \int_a^b \frac{\partial L}{\partial u}(x,y(x),\,{_aD_x^\alpha} y(x))\phi'(\alpha)dx=0\\
\end{array}\right.\end{equation}
where $\phi(\alpha)={_aD_x^\alpha} y(x)$.
\end{thm}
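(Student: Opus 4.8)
The plan is to carry out a standard first-variation argument, but now in the two independent directions $h$ (a variation of the curve) and $\delta$ (a variation of the order), exploiting the fact that these directions can be activated separately. I would begin from the two-parameter family $(y+h,\alpha+\delta)$ with $h(a)=h(b)=0$ and record that, because $L$ has continuous first partial derivatives, the increment $\Phi(y+h,\alpha+\delta)-\Phi(y,\alpha)$ admits the first-order expansion displayed just above the statement. The only point needing care in that expansion is the third argument of $L$: substituting $y+h$ and $\alpha+\delta$ turns ${_aD_x^\alpha}y$ into ${_aD_x^{\alpha+\delta}}(y+h)$, so the increment of that argument splits as ${_aD_x^{\alpha+\delta}}h+({_aD_x^{\alpha+\delta}}y-{_aD_x^\alpha}y)$, which is exactly why two separate $\tfrac{\partial L}{\partial u}$ terms appear.

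Since $(y,\alpha)$ is an extremal, the first-order part of this increment must vanish for every admissible pair $(h,\delta)$, and I would extract the two equations by restricting to the two coordinate directions. Taking $\delta=0$ kills the term ${_aD_x^{\alpha+\delta}}y-{_aD_x^\alpha}y$ and leaves $\int_a^b\bigl[\tfrac{\partial L}{\partial y}\,h+\tfrac{\partial L}{\partial u}\,{_aD_x^\alpha}h\bigr]dx=0$; applying the fractional integration-by-parts formula \eqref{integracao:partes} to transfer the derivative from $h$ onto $\tfrac{\partial L}{\partial u}$ (legitimate by the standing regularity hypotheses on $L$, with the endpoint conditions $h(a)=h(b)=0$ also covering the borderline case $\alpha=1$), and then invoking the fundamental lemma of the calculus of variations with $h$ arbitrary, yields the first line of \eqref{system}.

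For the second equation I would instead set $h=0$. Now only the term $\tfrac{\partial L}{\partial u}\cdot({_aD_x^{\alpha+\delta}}y-{_aD_x^\alpha}y)$ contributes to first order in $\delta$. Dividing the increment by $\delta$ and letting $\delta\to0$ turns the difference quotient $\bigl({_aD_x^{\alpha+\delta}}y-{_aD_x^\alpha}y\bigr)/\delta$ into the partial derivative of the fractional derivative with respect to its order, namely $\phi'(\alpha)$ with $\phi(\alpha)={_aD_x^\alpha}y(x)$; since $\delta$ ranges over a full neighbourhood of $0$, extremality forces $\int_a^b\tfrac{\partial L}{\partial u}\,\phi'(\alpha)\,dx=0$, which is the second line of \eqref{system}.

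The step I expect to be the genuine obstacle is the one peculiar to this problem: making rigorous sense of differentiating the Riemann-Liouville derivative with respect to its order. Concretely, one must show that $\alpha\mapsto{_aD_x^\alpha}y(x)=\frac{1}{\Gamma(1-\alpha)}\frac{d}{dx}\int_a^x(x-t)^{-\alpha}y(t)\,dt$ is differentiable and that the limit of the difference quotient may be passed under the outer $x$-integral; this requires controlling $\frac{\partial}{\partial\alpha}\bigl[(x-t)^{-\alpha}/\Gamma(1-\alpha)\bigr]$, which introduces logarithmic factors $\ln(x-t)$ and the digamma function, uniformly enough to justify the interchange. All remaining steps are the routine transcription of the classical first-variation argument.
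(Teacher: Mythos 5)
Your proposal follows essentially the same route as the paper: the same two-parameter variation $(y+h,\alpha+\delta)$ with $h(a)=h(b)=0$, the same first-order expansion of the increment, the first equation of \eqref{system} obtained by setting $\delta=0$ and applying the fractional integration by parts formula \eqref{integracao:partes} together with the fundamental lemma, and the second obtained by setting $h=0$ and identifying the limiting difference quotient with $\phi'(\alpha)$. Your closing observation---that differentiating the Riemann--Liouville derivative with respect to its order needs justification (logarithmic kernels, digamma factors, interchange of limit and integral)---flags a real regularity gap that the paper itself passes over in silence, but it is an added refinement rather than a different argument.
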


In the previous example the solution obtained satisfies system (\ref{system}) since
$$\frac{\partial L}{\partial u}(x,\overline y(x),\,{_aD_x^\alpha} \overline y(x)))=0.$$

% ---------------------------------------------------

\section*{Acknowledgements}

Work partially supported by the {\it Centre for Research on
Optimization and Control} (CEOC) from the ``Funda\c{c}\~{a}o para a
Ci\^{e}ncia e a Tecnologia'' (FCT), cofinanced by the European
Community Fund FEDER/POCI 2010. The second author was also
supported by FCT through the PhD fellowship SFRH/BD/39816/2007.

% ---------------------------------------------------

{\small

}

% ---------------------------------------------------

\end{document}